\def\@cite#1#2{{\m@th\upshape\bfseries%
[{#1\if@tempswa{\m@th\upshape\mdseries, #2}\fi}]}}
\let\secsymb=\S
\theoremstyle{plain}
\newtheorem{thm}{Theorem}[section]
\newtheorem{cor}[thm]{Corollary}
\newtheorem{lem}[thm]{Lemma}
\theoremstyle{definition}
\newtheorem{defn}[thm]{Definition}
\newtheorem{eg}[thm]{Example}
\newcommand{\bB}{{\mathbb{B}}}
\newcommand{\bC}{{\mathbb{C}}}
\newcommand{\bD}{{\mathbb{D}}}
\newcommand{\bT}{{\mathbb{T}}}
\newcommand{\bZ}{{\mathbb{Z}}}
  \newcommand{\A}{{\mathcal{A}}}
  \newcommand{\B}{{\mathcal{B}}}
  \newcommand{\C}{{\mathcal{C}}}
  \newcommand{\E}{{\mathcal{E}}}
\renewcommand{\H}{{\mathcal{H}}}
  \newcommand{\J}{{\mathcal{J}}}
  \newcommand{\M}{{\mathcal{M}}}
  \newcommand{\N}{{\mathcal{N}}}
\renewcommand{\O}{{\mathcal{O}}}
\renewcommand{\S}{{\mathcal{S}}}
  \newcommand{\T}{{\mathcal{T}}}
  \newcommand{\U}{{\mathcal{U}}}
  \newcommand{\W}{{\mathcal{W}}}
  \newcommand{\Z}{{\mathcal{Z}}}
\newcommand{\ep}{\varepsilon}
\renewcommand{\phi}{\varphi}
\newcommand{\upchi}{{\raise.35ex\hbox{\ensuremath{\chi}}}}
\newcommand{\fA}{{\mathfrak{A}}}
\newcommand{\fB}{{\mathfrak{B}}}
\newcommand{\fI}{{\mathfrak{I}}}
\newcommand{\fJ}{{\mathfrak{J}}}
\newcommand{\fL}{{\mathfrak{L}}}
\newcommand{\fM}{{\mathfrak{M}}}
\newcommand{\fR}{{\mathfrak{R}}}
\newcommand{\fT}{{\mathfrak{T}}}
\newcommand{\rC}{{\mathrm{C}}}
\newcommand{\qand}{\quad\text{and}\quad}
\newcommand{\qfor}{\quad\text{for}\quad}
\newcommand{\qforal}{\quad\text{for all}\quad}
\newcommand{\AND}{\text{ and }}
\newcommand{\Aut}{\operatorname{Aut}}
\newcommand{\diag}{\operatorname{diag}}
\newcommand{\Fix}{\operatorname{Fix}}
\newcommand{\id}{{\operatorname{id}}}
\newcommand{\ran}{\operatorname{Ran}}
\newcommand{\spn}{\operatorname{span}}
\newcommand{\rep}{\operatorname{rep}}
\newcommand{\srep}{\operatorname{s-rep}}
\newcommand{\ca}{\mathrm{C}^*}
\newcommand{\Fn}{\mathbb{F}_n^+}
\newcommand{\Fock}{\ell^2(\Fn)}
\newcommand{\lip}{\langle}
\newcommand{\rip}{\rangle}
\newcommand{\ip}[1]{\langle #1 \rangle}
\newcommand{\mt}{\varnothing}
\newcommand{\ol}{\overline}
\newcommand{\wot}{\textsc{wot}}
\newcommand{\ltwo}{\ell^2}
\begin{document}

\title[Semicrossed products]{Biholomorphisms of the unit ball
of $\bC^n$ and semicrossed products}

\author[K.R.Davidson]{Kenneth R. Davidson}
\address{Pure Math.\ Dept.\\U. Waterloo\\Waterloo, ON\;
N2L--3G1\\CANADA}
\email{krdavids@uwaterloo.ca}

\author[E.G.Katsoulis]{Elias~G.~Katsoulis}
\address{Dept. Math.\\ East Carolina University\\
Greenville, NC 27858\\USA}
\email{KatsoulisE@mail.ecu.edu}

\begin{abstract}
Assume that $\phi_1$ and $\phi_2$ are automorphisms of the non-commutative disc algebra $\fA_n$, $n \geq 2$.
We show that the semicrossed products $\fA_n \times_{\phi_1} \bZ^+$ and $\fA_n \times_{\phi_2} \bZ^+$ are isomorphic as algebras
if and only if $\phi_1$ and $\phi_2$ are conjugate
via an automorphism of $\fA_n$. A similar result holds for semicrossed
products of the d-shift algebra $\A_d$, $d \geq 2$.
\end{abstract}

\thanks{2000 {\it  Mathematics Subject Classification.}
47L55, 47L40, 46L05, 37B20, 37B99.}
\thanks{{\it Key words and phrases:}  }
\thanks{First author partially supported by an NSERC grant.}
\thanks{Second author was partially supported by a grant from ECU}

\date{}
\maketitle

\section{Introduction}\label{S:intro}

In this paper, we study crossed products of the non-commutative disk algebra $\fA_n$
and the commutative analogue, the $d$-shift algebra $\A_d$ of multipliers on the
Arveson--Drury space.  The isometric automorphisms of these algebras come from
the natural action of the group $\Aut(\bB_n)$ of conformal automorphisms of the
unit ball $\bB_n$ of $\bC^n$ on the character space $\ol{\bB_n}$.
We will show that the semi-crossed product that is formed determines
the automorphism up to analytic conjugacy.

There is an extensive body of work studying dynamical systems via associated
operator algebras going back to work of von Neumann.  The use of nonself-adjoint
operator algebras in this area begins with seminal work of Arveson \cite{Arv} and
Arveson--Josephson \cite{ArvJ}.  This was put into the abstract setting of
semi-crossed products by Peters \cite{Pet}.
See \cite{DKsurvey} for an overview of some of the recent work in this area.

Here we will be concerned with analytic actions on operator algebras which
encode that analytic structure in some way.
In \cite{HPW} , Hoover, Peters and Wogen initiated a study of semi-crossed products of
the disk algebra $A(\bD)$ by the automorphisms induced by a Mobius map.
It was subsequently studied by Buske and Peters \cite{BP}.

This problem was completed and extended in \cite{DKconj}.
If $K$ is the closure of a finitely connected bounded domain $\Omega$ in $\bC$
with nice boundary, then $A(K)$ denotes the space of continuous functions
on $K$ which are analytic on $\Omega$.
If $\gamma\in A(K)$ maps $K$ into itself, then it induces an endomorphism
of $A(K)$ sending $f\to f\circ\gamma$.
We showed that the semicrossed products $A(K_i)\times_{\gamma_i}\bZ^+$ are
isomorphic if and only if $\gamma_i$ are analytically conjugate,
except for one exceptional case.

Here we deal with a higher dimensional version.  The algebras $\fA_n$ (resp. $\A_n$)
are the universal norm-closed operator algebras for a (commuting) $n$-tuple of
operators $T_1,\dots,T_n$ such that $\| [ T_1\ \dots \ T_n] \| \le 1$.
This is in the sense that given any such $n$-tuple $T$, there is a unique
completely contractive homomorphism of $\fA_n$ (or $\A_n$) onto the
algebra $\A(T_1,\dots,T_n)$ taking generators to generators.
These algebras are more tractable, in many ways, than the algebras of analytic functions.

$\A_n$ is the complete quotient of $\fA_n$ by its commutator ideal.
So the two algebras are intimately linked.
The character space in both cases is homeomorphic to $\ol{\bB}_n$
and the Gelfand map carries them into the space $A(\bB_n)$ of
analytic functions on $\bB_n$ which extend to be continuous on the closure.
The group of isometric isomorphisms is just $\Aut(\bB_n)$, the group of
conformal automorphisms of $\bB_n$, and this action commutes with the Gelfand map.

In \cite{DKconj}, a lot of information is obtained from representations of the algebra
onto the upper triangular $2\times2$ matrices.  In that case, the underlying algebra
$A(K)$ has no such representations, and the restriction to $A(K)$ was similar
to a diagonal representation.
Here however, there is a complicating factor because $\fA_n$ has
so many finite dimensional representations.
It will be necessary to use a much larger family of nest representation to
filter out the ``noise'' caused by these other representations.

The free semigroup $\Fn$ consists of all words in an alphabet of $n$ letters.
Consider the left regular representation $\lambda$ of $\Fn$ on Fock space, $\Fock$.
Let $S_i = \lambda(i)$.
The non-commutative disc algebra $\fA_n$, for $n \ge 2$,  is the nonself-adjoint
unital algebra generated by $S_1,\dots,S_n$.
It sits as a subalgebra of the Cuntz--Toeplitz C*-algebra.
However the quotient map onto the Cuntz algebra is completely isometric on $\fA_n$.
So $\fA_n$ may be considered as a subalgebra of $\O_n$.
Moreover the operator algebra generated by any $n$-tuple of isometries
$V_1,\dots,V_n$ with pairwise orthogonal ranges is completely isometrically
isomorphic to $\fA_n$.
These algebras were introduced by Popescu \cite{Pop2} as a natural multivariable
generalization of disc algebra $A(\bD)$.
The Frahzo--Bunce dilation Theorem \cite{Fr,Bun,Pop1} shows that any row contractive $n$-tuple
$T=[T_1, \dots, T_n]$ dilates to an $n$-tuple of isometries $V = [V_1, \dots, V_n]$
with pairwise orthogonal ranges.
Popescu used it to establish a natural analogue of the von Neumann inequality
for row contractive $n$-tuples.
 
The (norm closed) d-shift algebra $\A_d$, $d \ge 2$, plays the same role for a
row contractive $d$-tuple of commuting operators.
It is the quotient of $\fA_d$ by the commutator ideal, and is represented by the
operators $\hat{S_i}$ which act on symmetric Fock space, $\H_d$, obtained as the
compression of $S_i$ to this co-invariant subspace.
Symmetric Fock space can be considered as a space of functions on $\bB_n$.
It is a reproducing kernel Hilbert space on $\bB_n$ with kernel $k(x,y) = \dfrac1{1-\ip{y,x}}$.
The operators $\hat{S_i}$ are just the multipliers by the coordinate functions $z_i$.
This algebra was introduced by Arveson \cite{Arv3}.
The corresponding dilation theorem due to Drury \cite{Dru} shows that any commuting
row contractive $d$-tuple $T$ is the range of a completely contractive homomorphism
of $\A_d$ taking generators to generators.

\section{Crossed Products} \label{S:crossed}

\begin{defn}   \label{def:sem}
Let $\Aut(\fA_n)$ denote the isometric automorphisms
of $\fA_n$, and let $\phi \in \Aut(\fA_n)$. The
semicrossed product $\fA_n \times_\phi \bZ^+$ is the
universal operator algebra containing $\fA_n$ and a contraction $U$
so that $AU = U \phi(A)$ for all $A \in \fA_n$.
\end{defn}

Specifically, for any completely contractive representation $\pi$ of $\fA_n$
on a Hilbert space $\H$ and a contraction $K \in B(\H)$
satisfying $\pi(A)K=K \pi(\phi(A))$, for all $A \in \fA_n$, will produce
a contractive representation $\pi \times K$ of $\fA_n \times_\phi \bZ^+$ on $\H$, which on polynomials
is defined as follows
\[
(\pi \times K)(\sum_n U^n A_n ) = \sum_n K^n \pi(A_n).
\]
There are several such representations
of $\fA_n \times_\phi \bZ^+$, which, in one form or another, have already appeared in the literature.

\begin{eg} Let $\phi \in \Aut \fA_n$.
Voiculescu \cite{V} has constructed a unitary $U$ acting on the
Fock space $\H_n$ which implements the action of $\phi$ on the Cuntz--Toeplitz
$\ca$-algebra $\E_n$ by $U^*AU = \phi(A)$.
This provides a covariant
pair $(\id_{\E_n}, U)$ for $(\E_n, \phi)$ and therefore it
produces a representation $\id_{\E_n}\times U$ of
$\E_n\times_\phi \bZ$.
This provides a representation $(\id_{\fA_n}, U)$ of
$\fA_n \times_\phi \bZ^+$ by restriction.

Similarly, by taking quotients with the compacts on $\H_n$,
we obtain a covariant
representation $(\pi_{\O_n}, \hat{U})$ for $(\O_n, \phi)$ and therefore representations for both
$\O_n \times_\phi \bZ$ and $\fA_n \times_\phi \bZ^+$, inside
the Calkin algebra, which we denote as $\pi_{\O_n}\times \hat{U}$.
In the case where $\phi$ is aperiodic, both representations $\pi_{\O_n}\times \hat{U}$
and $\id_{\E_n}\times U$ of $\fA_n \times_\phi \bZ^+$
are faithful \cite{DavKlast}.
\end{eg}

\begin{eg}
Let $\pi$ be any completely contractive representation of $\fA_n$ on a Hilbert space $\H$.
Define $\tilde\pi$ on $\H \otimes \ltwo$ by
\[ \tilde\pi(a) = \sum_{k\ge0}\strut^\oplus \pi\phi^k(a) \qand U = I \otimes S^* \]
where $S$ is the unilateral shift.  This is easily seen to yield a completely contractive
representation $\tilde\pi$ of $\fA_n$ and a contraction $U$ so that
$(\tilde\pi,U)$ yields a representation of $\fA_n \times_\phi \bZ^+$.
\end{eg}

\begin{eg} Let $\phi \in \Aut \fA_n$.
Consider the non-commutative disc algebra $\fA_{n+1}$ acting on the Fock space
$\H_{n+1}$ and define an ideal
\[ \fJ = \big\lip S_iS_{n+1} -S_{n+1}\phi(S_i) : 1 \le i \le n \big\rip .\]
The \wot-closure $\ol{\fJ}$ of $\fJ$ is an ideal of $\fL_{n+1}$, and these ideals were studied
in \cite{DP2,DP3}.  In particular, it is shown in \cite{DP2} that $\ol{\fJ}$ is determined by
its range, which is a subspace invariant for both $\fL_{n+1}$ and its commutant $\fR_{n+1}$.
Then in \cite{DP3}, it is shown that  $\fL_{n+1}/\ol{\fJ}$ is completely isometrically
isomorphic to the compression to $\H_\phi = \ran(\fJ)^\perp$.  Since
\[
  \ol{\ran{\fJ}} = \spn \big\{  A(S_iS_{n+1} -S_{n+1}\phi(S_i))\H_{n+1} :  i=1, 2, \dots, n \big\}
\]
is evidently orthogonal to $\xi_\mt$, we see that $\H_\phi$ is non-empty.
The compression of $\fA_n$ to $ \H_{\phi}$ is a completely contractive homomorphism $\rho$,
and the compression $B$ of $S_{n+1}$ is also a contraction.  Therefore
$(\rho, B)$ is a covariant representation of $(\fA_n,\phi)$, and thus
determines a completely contractive representation $\fA_n \times_\phi \bZ^+$.
\end{eg}

\begin{eg} \label{used} Any representation of $\fA_n$ produces a
representation of $\fA_n \times_\phi \bZ^+$
by simply taking $U=0$. In section \ref{S:ncdisc}, we will use repeatedly that observation.
In section \ref{sec:main} various finite dimensional
representations of $\fA_n \times_\phi \bZ^+$ will also be constructed that will allow us to
classify them as algebras.
\end{eg}

\begin{eg} In the special case where $\phi$ is just
a permutation, Power \cite{Pow2} and
Davidson, Power and Yang \cite{DPY} have studied a specific
representation of $\fA_n \times_\phi \bZ^+$, whose
image is the tensor algebra of a rank 2 graph. (See also \cite{PSolel}.)
\end{eg}

Actually, this last example raises the question whether one can construct
\textit{concrete} faithful representations of $\fA_n \times_\phi \bZ^+$, for arbitrary $\phi$. The case
$n=1$ has been resolved in \cite{BP} and the general case is being considered in \cite{DavKlast}.

One can also define the semicrossed product $\A_n \times_\phi \bZ^+$ of the d-shift algebra $\A_n$ by an
automorphism $\phi \in \Aut(\A_n)$, as we did in Definition \ref{def:sem}. The above examples,
modified appropriately, will produce representations for that
semicrossed product as well.

\section{The character space of $\fA_n \times_\phi \bZ^+$.}   \label{sec:character}

The character space $\fM_{\fA_n}$ of $\fA_n$ is homeomorphic to $\ol{\bB}_n$
via the identification $\theta \to \theta^{(n)}([S_1\ \dots S_n])$.
Indeed, every multiplicative linear functional is unital and completely contractive.
So $\theta^{(n)}([S_1\ \dots S_n]) \in \ol{\bB}_n$.
The Frahzo--Bunce--Popescu dilation theory \cite{Fr,Bun,Pop1} for row contractions
shows that all of these points are possible for $\fA_n$ and hence for $\A_n$.
Since the polynomials are dense in $\fA_n$ and in $\A_n$, it
follows that $\theta$ is uniquely determined by this data.
So we can write $\theta_z$ for the character corresponding to $z \in \bB_n$.
When $\|z\|<1$, this functional is given as a vector state on the left regular representation.

Any $\phi \in \Aut(\fA_n)$ acts on the character space
$\fM_{\fA_n}$ by $\hat\phi(\theta) = \theta \circ \phi$.
Note that $\hat\phi$ is a homeomorphism, and so carries the interior $\bB_n$ onto itself.
As in Davidson--Pitts \cite{DP2} (the \wot-continuous version), it follow that $\hat\phi$
is a conformal automorphism of $\bB_n$.
See \cite{Pop_auto} for a different proof by Popescu.
It turns out that this assignment is one-to-one and onto and therefore establishes
a bicontinuous isomorphism between $\Aut(\fA_n)$ and $\Aut(\bB_n)$.

\begin{defn}
If $\A$ is a Banach algebra,
a subset $\O$ of $\fM_{\A}$ is said to be \textit{analytic} if there exist
a domain $\Omega \subset \bC^k$ and a homeomorphism $h:\Omega\to\O$
such that $\hat{a}\circ h$ is analytic for every $a \in \A$.
\end{defn}

If $\C$ is the commutator ideal of the noncommutative disk algebra $\fA_n$,
then $\A_n \simeq \fA_n/\C$.  Let $q$ be the quotient map.
If $\hat\phi \in \Aut(\bB_n)$, write $\phi$ for the corresponding automorphism of $\fA_n$
and $\Phi$ for the automorphism of $\A_n$.  Then $q\phi = \Phi q$.
This induces a quotient map of $\fA_n \times_\phi \bZ^+$,
onto $\A_n \times_\Phi \bZ^+$.
As $\C$ is contained in the commutator ideal of $\fA_n \times_\phi \bZ^+$,
we see that $\A_n \times_\Phi \bZ^+$ has  the same character space.
It follows easily that they have the same maximal analytic sets.

For $\phi \in \Aut(\fA_n)$, we write
\[ \Fix(\hat\phi) = \{ \theta \in \fM_{\fA_n} : \hat\phi(\theta) = \theta \}. \]
This consists either of an affine subset of $\ol{\bB}_n$, or one or two points
on the boundary (see Rudin \cite{Rud}).  Let $F_0(\phi) = \Fix(\phi) \cap \bB_n$ and
let $F_1(\phi)= \Fix(\phi) \cap \partial\bB_n$.
The next result characterizes the maximal analytic subsets of $\fM_{\fA_n \times_\phi \bZ^+}$.

\begin{lem}  \label{useful}
$\fM_{\fA_n \times_\phi \bZ^+} =( \ol{\bB}_n \times \{0\}) \cup (\Fix(\phi) \times \ol{\bD})$.
The maximal analytic sets in $\fM_{\fA_n \times_\phi \bZ^+}$ are
$\bB_n\times\{0\}$, $F_0(\phi) \times \bD$,
$F_0(\phi) \times \{\lambda\}$ for $\lambda \in \bT$,
and $\{x\} \times \bD$ for $x \in F_1(\phi)$.
\end{lem}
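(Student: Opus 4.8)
The plan is to first pin down the character space as a subset of $\ol{\bB}_n\times\ol{\bD}$, and then sort its maximal analytic subsets by a maximum--principle argument. Any character $\Theta$ of $\fA_n\times_\phi\bZ^+$ is unital and completely contractive, so its restriction to $\fA_n$ is a character and hence equals $\theta_z$ for a unique $z\in\ol{\bB}_n$; setting $w=\Theta(U)$, the fact that $U$ is a contraction gives $|w|\le1$. Applying $\Theta$ to the defining relations $S_iU=U\phi(S_i)$ and using $\theta_z\circ\phi=\theta_{\hat\phi(z)}$, one gets $w\,(z_i-\hat\phi(z)_i)=0$ for $1\le i\le n$, so either $w=0$ or $z\in\Fix(\phi)$. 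Conversely, given such a pair, the one--dimensional representation $\theta_z$ of $\fA_n$ together with the scalar contraction $K=w$ is a covariant pair, since the identity $\theta_z(A)\,w=w\,\theta_{\hat\phi(z)}(A)$ holds trivially when $w=0$ and holds because $\hat\phi(z)=z$ when $z\in\Fix(\phi)$; the universal property then produces a character $\theta_z\times w$. Thus $\Theta\mapsto(z,w)$ is a homeomorphism of $\fM_{\fA_n\times_\phi\bZ^+}$ onto $(\ol{\bB}_n\times\{0\})\cup(\Fix(\phi)\times\ol{\bD})$, under which $\hat A(z,w)=\theta_z(A)$ and $\hat U(z,w)=w$.

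For the analytic structure I would record two consequences of the maximum principle. Since $\|\cdot\|^2$ pulls back under a holomorphic map to a plurisubharmonic function with vanishing complex Hessian when it is constant, a holomorphic map from a domain into $\ol{\bB}_n$ is either constant with value on $\partial\bB_n$ or else maps into the open ball $\bB_n$; likewise a holomorphic map into $\ol{\bD}$ is either constant with value in $\bT$ or maps into $\bD$. Next, by Rudin's theorem $\Fix(\hat\phi)$, when it meets $\bB_n$, is the intersection with $\ol{\bB}_n$ of a complex--affine subspace, so $F_0(\phi)$ is biholomorphic to a domain in some $\bC^k$. Because polynomials in $S_1,\dots,S_n,U$ are dense, to verify that a set is analytic it suffices to check that $z$ and $w$ are holomorphic in the chosen parameters, which is immediate for each listed set: parametrize $\bB_n\times\{0\}$ by $z\in\bB_n$; parametrize $F_0(\phi)\times\bD$ by $(z,w)$; parametrize $F_0(\phi)\times\{\lambda\}$ by $z$, with $\hat U\equiv\lambda$; and parametrize $\{x\}\times\bD$ by $w$, with each $\hat A\equiv\theta_x(A)$.

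The heart of the argument is maximality. Given any analytic set with chart $h\colon\Omega\to\O$, the maps $z\circ h$ and $w\circ h$ are holomorphic, and I would split into cases according to the two observations above. If both are non-constant then $z\in\bB_n$ and $w\in\bD$, and since the zero set of the non-constant function $w$ is nowhere dense, the constraint forces $z\in\Fix(\phi)\cap\bB_n=F_0(\phi)$ on a dense open set, hence everywhere, so $\O\subset F_0(\phi)\times\bD$. If $z$ is non-constant and $w\equiv\lambda$, then $\lambda=0$ gives $\O\subset\bB_n\times\{0\}$, while $\lambda\ne0$ forces $z\in F_0(\phi)$, placing $\O$ inside $F_0(\phi)\times\{\lambda\}$, which for $|\lambda|<1$ sits inside $F_0(\phi)\times\bD$ and for $\lambda\in\bT$ is one of the listed sets. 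If $z\equiv x$ and $w$ is non-constant, then $w\ne0$ somewhere forces $x\in\Fix(\phi)$, so either $x\in F_0(\phi)$ and $\O\subset F_0(\phi)\times\bD$, or $x\in F_1(\phi)$ and $\O\subset\{x\}\times\bD$. As each listed set is itself analytic and every analytic set lies in one of them, the listed sets are exactly the maximal analytic sets.

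The main obstacle is this last step, specifically the interplay of the two boundary phenomena with the fixed--point constraint. The delicate points are justifying that an analytic map meeting $\partial\bB_n$ or $\bT$ at an interior parameter must be constant there, and using Rudin's structure theorem to ensure that the locus forced by $w\ne0$ is exactly the affine slice $F_0(\phi)$ and not some larger variety. One must also note that $F_0(\phi)\times\{\lambda\}$ with $|\lambda|<1$ and $\{x\}\times\bD$ with $x\in F_0(\phi)$ are not separately maximal but lie inside $F_0(\phi)\times\bD$, and that the degenerate case where both $z$ and $w$ are constant produces only single points, each of which either lies in one of the four sets or is an isolated boundary point of no positive dimension; it is the clean bookkeeping of these boundary and degenerate cases that requires the most care.
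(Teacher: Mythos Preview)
Your proof is correct and follows essentially the same approach as the paper: identify the character space via the covariance relation, verify the listed sets are analytic, and then use the maximum modulus principle together with the constraint ``$w=0$ or $z\in\Fix(\hat\phi)$'' to show every analytic set lies in one of them. Your case split (according to whether $z\circ h$ and $w\circ h$ are constant) is organized a bit differently from the paper's (which splits according to whether the image meets $\ol{\bB}_n\setminus\Fix(\hat\phi)$, $F_0(\phi)$, or $F_1(\phi)$), but the underlying arguments are the same; in particular, the paper handles the $F_1$ case by applying the maximum modulus principle to $h(w)(\sum\bar x_iS_i)$, which is exactly your observation that a holomorphic map into $\ol{\bB}_n$ touching the sphere must be constant.
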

 
\begin{proof}
A character $\theta$ is determined by its restriction to $\fA_n$, $\theta|_{\fA_n}$,
and the scalar $\lambda = \theta(U)$.
There is a point $z \in \ol{\bB}_n$ so that $\theta|_{\fA_n} = \theta_z$,
and $|\lambda| \le \|U\|=1$.
As in Davidson--Katsoulis \cite[\secsymb3]{DKconj}, if $\theta_z$ is not a fixed point of $\hat\phi$, then $\lambda = 0$.
In that case, we write $\theta = \theta_{z,0}$.
On the other hand, if $\theta_z$ is a fixed point of $\hat\phi$, then the 1-dimensional
representation $\theta_z$ of $\fA_n$ and any $\lambda \in \ol{\bD}$ satisfy the covariance
relations.
Therefore there is a (completely contractive) representation $\theta_{z,\lambda}$
of the semicrossed product
$\fA_n \times_\phi \bZ^+$ on $\bC$ with $\theta_{z,\lambda}|_{\fA_n} = \theta_z$
and $\theta_{z,\lambda}(U)=\lambda$.
Clearly $\theta_{z,\lambda}$ is a character.  Thus
\[
 \fM_{\fA_n \times_\theta \bZ^+} =
 ( \ol{\bB}_n \times \{0\}) \cup (\Fix(\phi) \times \ol{\bD}) .
\]

Evidently the sets $\bB_n\times\{0\}$, $F_0(\phi) \times \bD$,
$F_0(\phi) \times \{\lambda\}$ for $\lambda \in \bT$, and
$\{x\} \times \bD$ for $x \in F_1(\phi)$ are analytic sets.
On the other hand, suppose that $\Omega \subset \bC^m$
is a domain and  $h:\Omega \to \fM_{\fA_n \times_\phi \bZ^+}$
is an analytic map.
Either $h(\Omega)$ contains $\theta_{x,0}$ for some $x \not\in \Fix(\phi)$,
or $h(\Omega) \subset \Fix(\phi) \times \ol{\bD}$.

In the first case, suppose that $h(w_0) = \theta_{x,0}$.
Then there is a neighbourhood $\W$ of $w_0$
so that $h(w) = \theta_{h_1(w),0}$ for $w \in \W$.  Hence $h(w)(U) = 0$ on $\W$.
By analyticity, it must vanish on all of $\Omega$.
So $h(\Omega) \subset \ol{\bB}_n \times\{0\}$.
This function is not constant, so it cannot achieve its maximum modulus.
So $h(\Omega) \subset \bB_n \times \{0\}$.

In the latter case, suppose that it contains point
$\theta_{x,\lambda}$ for $x \in F_0(\phi)$.  Then
\[ g(w) = h(w)\big([S_1\ \dots \ S_n] \big) \qfor w \in \Omega \]
takes values in $\ol{\bB}_n$ and takes a value in $\bB_n$ at some point.
This map is analytic, and hence is either constant or fails to
attain its maximal modulus.  In either case, it remains in $\bB_n \times \ol{\bD}$.
Similarly, either $h(w)(U)$ takes values in $\bD$ or it is a constant
value of modulus $1$.
So it is contained in $F_0(\phi) \times \bD$ or in $F_0(\phi) \times \{\lambda\}$ for some $\lambda \in \bT$.

Finally, suppose that it contains a point $\theta_{x,\lambda}$ for $x \in F_1(\phi)$,
say $x = (x_1,\dots,x_n) \in \partial\bB_n$.
Then $S = \sum_{i=1}^n \ol{x_i} S_i$ is a contraction such that $\theta_{x,\lambda}(S)=1$.
Since $|\theta(S)| \le 1$ for all characters $\theta$ and $h(w)(S)$ is analytic,
it follows that this map is constant.  So $h(\Omega)$ is contained in $\{x\} \times \ol{\bD}$.
Again the maximum modulus theorem shows that $h(\Omega)$ is contained in $\{x\}\times\bD$.
\end{proof}

The structure of the maximal analytic sets for
$\fA_n \times_\phi \bZ^+$ and for $\A_n \times_\Phi \bZ^+$, $n \ge 2$
is much richer than that of the case $n=1$.
In order to handle this situation, we will use the theory of several
complex variables.

\section{The ideal $\fJ_{\phi}$}\label{S:ncdisc}

As in \cite{DKconj,DKmult} where we studied various semicrossed products,
we will make significant use of nest  representations.
These are representations into $\fT_k$, the upper triangular $k \times k$ matrices,
such that the only invariant subspaces of the image are those of $\fT_k$.
However, in these earlier cases, where the algebra was a semicrossed product of $\rC(X)$ or
of $\A(\bD)$, the image of this subalgebra was abelian and in the case of $\rC(X)$,
the  image was diagonalizable.
There is a considerable difference here because there is a plethora of
nest representations of $\fA_n$ onto $\fT_k$ for any $k\ge1$ \cite{DP2,DKnest}.
Consequently, some care is required to separate these from the representations
we require to determine the map $\phi$.
This section provides the technical tool to accomplish that.

\begin{defn}
If $\phi$ is an automorphism of the non-commutative disc algebra $\fA_n$, $n \geq 2$,
then $\fJ_\phi = U(\fA_n \times_\phi \bZ^+)$ will denote the closed ideal of
$\fA_n \times_\phi \bZ^+$ generated by $U$.
\end{defn}

The main result of this section shows that $\fJ_{\phi}$ is determined intrinsically
within $\fA_n \times_\phi \bZ^+$.
To accomplish this, we require a construction of nest representations which specify the
diagonal in advance.
We do this by modifying an earlier construction of ours \cite{DKnest} regarding nest
representations of $\fA_n$.
Then we compose these representations with the canonical quotient
$\pi: \fA_n \times_\phi \bZ^+ / \fJ_{\phi} \simeq \fA_n$, which sends $U$ to $0$,
and obtain nest representations of $\fA_n \times_\phi \bZ^+$.
 
Let $\rep_{\fT_k} \fA_n \times_\phi \bZ^+$ denote the collection of all
representations of $\A$ onto $\fT_k$, the upper triangular $k \times k$ matrices.
To each nest representation $\pi \in \rep_{\fT_k} \fA_n \times_\phi \bZ^+$,
we associate $k$ characters $\theta_{\pi, 1}, \theta_{\pi, 2}, \dots, \theta_{\pi, k}$
which correspond to the diagonal entries.
That is, if $\{\xi_1,\xi_2,\dots,  \xi_k\}$ is the canonical basis of $\bC^k$
which makes $\fT_k$ upper triangular, then
\[
  \theta_{\pi, i}(A)\equiv \langle \pi(A)\xi_i ,  \xi_i\rangle,
  \qfor A \in \fA_n \times_\phi \bZ^+ ,\ i=1,2,\dots, k .
\]

If $Z = (z_1,z_2,\dots ) \in \bB_{n}^{\infty}$, then
$\N_{Z,k}\subseteq \rep_{\fT_k} \fA_n \times_\phi \bZ^+$ will denote the
family of $k\times k$ nest representations $\pi$ such that
$\theta_{\pi,i}=\theta_{z_i,0}$ for $1 \le i \le k$.
We set $\N_Z = \bigcup_{k\ge1} \N_{Z,k}$.

\begin{lem} \label{L:zeroU}
Let $Z\in \bB_{n}^{\infty}$ such that $Z \cap \hat\phi(Z) = \mt$.
If $\rho$ is a $k\times k$ nest representation of $\fA_n \times_\phi \bZ^+$
in $\N_{Z, k}$, then $\rho(U) = 0$.
\end{lem}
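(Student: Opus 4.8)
The plan is to exploit the single covariance relation $AU=U\phi(A)$ together with the fact that the prescribed diagonal forces $\rho(U)$ into strictly upper triangular form, and then to peel off $\rho(U)$ one super-diagonal at a time. Write $T=\rho(U)$ and let $(t_{ij})$ denote its matrix entries relative to the canonical basis $\{\xi_1,\dots,\xi_k\}$. Since $\rho\in\N_{Z,k}$, the diagonal characters satisfy $\theta_{\pi,i}=\theta_{z_i,0}$, and in particular $t_{ii}=\theta_{\pi,i}(U)=\theta_{z_i,0}(U)=0$; thus $T$ is strictly upper triangular. Applying the homomorphism $\rho$ to the defining relation $AU=U\phi(A)$ yields the intertwining identity
\[ \rho(A)\,T = T\,\rho(\phi(A)) \qfor A\in\fA_n. \]

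First I would record the diagonals of the two intertwiners. For $A\in\fA_n$, the matrix $\rho(A)$ is upper triangular with $(i,i)$ entry $\theta_{z_i}(A)$, while $\rho(\phi(A))$ is upper triangular with $(i,i)$ entry $\theta_{z_i}(\phi(A))=(\theta_{z_i}\circ\phi)(A)=\theta_{\hat\phi(z_i)}(A)$, using $\hat\phi(\theta)=\theta\circ\phi$. I would then prove $t_{ij}=0$ by induction on the band $d=j-i$, the base case $d\le0$ being the strict upper triangularity just established. For the inductive step, compute the $(i,i+d)$ entry of each side of the intertwining identity. On the left, $\rho(A)_{il}\ne0$ forces $l\ge i$ while $t_{l,i+d}\ne0$ forces $l\le i$ once the lower bands are known to vanish, so only $l=i$ survives, contributing $\theta_{z_i}(A)\,t_{i,i+d}$; on the right, symmetrically only $l=i+d$ survives, contributing $t_{i,i+d}\,\theta_{\hat\phi(z_{i+d})}(A)$. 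Hence
\[ t_{i,i+d}\big(\theta_{z_i}(A)-\theta_{\hat\phi(z_{i+d})}(A)\big)=0 \qfor A\in\fA_n. \]

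The hypothesis $Z\cap\hat\phi(Z)=\mt$ enters precisely here: it guarantees $z_i\ne\hat\phi(z_{i+d})$, so the characters $\theta_{z_i}$ and $\theta_{\hat\phi(z_{i+d})}$ differ on some generator $S_m$ (indeed $\theta_z(S_m)$ is the $m$-th coordinate of $z$), and therefore $t_{i,i+d}=0$. Running the induction through all bands gives $T=\rho(U)=0$. I expect the only delicate point to be the band bookkeeping in the inductive step, namely verifying that the upper triangular support of $\rho(A)$ and of $\rho(\phi(A))$ combines with the vanishing of the strictly lower bands of $T$ to leave exactly one corner term on each side; once this is in place the conclusion is immediate. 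It is worth emphasizing that the separation hypothesis on $Z$ is invoked at every band level to distinguish the relevant pair of characters, so the statement would fail without it.
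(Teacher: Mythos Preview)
Your proof is correct and follows essentially the same approach as the paper's. The paper phrases the argument as a minimal counterexample (pick a nonzero entry of $\rho(U)$ with $j-i$ minimal, then derive $\rho(U)_{ij}=0$) rather than an explicit induction on the band index, and it exhibits a specific separating element $A$ with $\hat A(z_i)=1$ and $\hat A(\hat\phi(z_j))=0$ rather than merely invoking that the two characters differ; but the underlying computation and the role of the hypothesis $Z\cap\hat\phi(Z)=\varnothing$ are identical.
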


\begin{proof}
If $\rho(U) \ne 0$, then there is a non-zero matrix entry $\rho(U)_{ij}$ with $j-i$ minimal.
Since $\hat\phi(z_j) \ne z_i$, there is an element $A \in \fA_n$ such that
$\hat{A}(z_i)=1$ and $\widehat{\phi(A)}(z_j) = \hat{A}(\hat\phi(z_j)) =0$.
Then using the fact that $\rho(U)_{ik}=0=\rho(U)_{kj}$ for $i \le k < j$, compute
\begin{align*}
 0 &=
 \rho(AU-U\phi(A))_{ij} = \theta_{z_i,0}(A) \rho(U)_{ij} - \rho(U)_{ij} \theta_{z_j,0}(\phi(A))
 \\&= \hat{A}(z_i)  \rho(U)_{ij} - \rho(U)_{ij}  \hat{A}(\hat\phi(z_j))  =  \rho(U)_{ij}. \qedhere
\end{align*}
\end{proof}

Since $\hat\phi$ is an analytic function, the property that $Z \cap \hat\phi(Z) = \mt$
is generic for sequences $Z\in \bB_{n}^{\infty}$.
We would like to show that in this case, that $\J_Z := \bigcap_{\rho\in\N_Z} \ker\rho$ is
equal to $\fJ_{\phi}$.  While this may be true, we were not able to prove it.
However, we show that if one allows the diagonal entries to vary over small neighbourhoods,
then this is the case (generically).

Start with a sequence $Z= (z_1, z_2, \dots ) \in \bB_{n}^{\infty}$ and
a word $w=l_1\dots l_k \in \Fn$ of length $k\ge2$.
Fix $0 \le \delta <1$ so that $Z_{k+1}\equiv (z_1, z_2, \dots ,z_{k+1}) \in \delta \bB_{n}^{k+1}$ and
construct a representation $\rho_{Z ,w}$ in $\N_{Z,k+1}$ as follows.
Write $z_i = (z_{i1},\dots,z_{in})$.
Define $\rho_{Z,w}(U)=0$ and
\[ \rho_{Z,w}(S_j) = \diag(z_{ij})_{i=1}^{k+1} + (1-\delta) \sum_{l_i=j} E_{i,i+1} \qfor 1 \le j \le n .\]
It is easy to see that
\[ \big\| \big[ \rho_{Z,w}(S_1) \ \dots \ \rho_{Z,w}(S_n) \big] \big\| < 1 .\]
So this determines a completely contractive representation.

Evidently, if we fix the word $w$, then the above construction not only works
for $Z$ but also for any other (finite or infinite)
sequence $Z'$, as long as
$Z_{k+1}^{\prime} \in \delta \bB_{n}$. Therefore, whenever convenient,
we will think of $Z$ in $\rho_{Z,w}$ as a variable that ranges over all sequences
whose first $k+1$ entries belong to $\delta \bB_{n}$.

\begin{lem}
With the above notation, we have $\rho_{Z,w}(S_w)_{1,k+1} = (1-\delta)^k$
and $\rho_{Z,w}(S_v)_{1,k+1} = 0$ for
all other words with $|v| \le k$.  For each word $v$ with $|v|>k$, there is an analytic
function $F_v$ on $\delta \bB_n^{k+1}$ with $F(0)=0$
so that
\[ \rho_{Z,w}(S_v)_{1,k+1} = F_v(z_1,\dots,z_{k+1}) .\]

Moreover, if the coordinates $\{ z_{ij} : 1\leq i \leq k+1, 1 \le j \le n \}$ are all distinct,
then the range of $\rho_{Z,w}$ is all of $\T_{k+1}$. So $\rho_{Z,w} \in \N_{Z,k+1}$.
\end{lem}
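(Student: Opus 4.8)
The first statements reduce to a path count in the product of the generators. Write $T_j=\rho_{Z,w}(S_j)$; by construction each $T_j$ is upper triangular with nonzero entries only on the main diagonal and the first superdiagonal, the superdiagonal entry in position $(i,i+1)$ being $(1-\delta)$ exactly when $l_i=j$. For a word $v=m_1\cdots m_p$ I would expand
\[ \rho_{Z,w}(S_v)_{1,k+1}=(T_{m_1}\cdots T_{m_p})_{1,k+1}=\sum (T_{m_1})_{i_0,i_1}(T_{m_2})_{i_1,i_2}\cdots(T_{m_p})_{i_{p-1},i_p}, \]
the sum running over chains $1=i_0\le i_1\le\cdots\le i_p=k+1$ in which each step either fixes the index or raises it by one. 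Passing from $1$ to $k+1$ forces exactly $k$ of the $p$ steps to raise the index. Hence the entry vanishes when $p<k$; when $p=k$ the chain must be $1\to 2\to\cdots\to k+1$ and the entry equals $\prod_{s=1}^{k}(T_{m_s})_{s,s+1}$, which is $(1-\delta)^k$ if $m_s=l_s$ for every $s$ (that is, $v=w$) and $0$ otherwise. When $p>k$ each chain contains at least one index-fixing step, contributing a diagonal coordinate; so $\rho_{Z,w}(S_v)_{1,k+1}$ is a polynomial $F_v$ in the coordinates of $z_1,\dots,z_{k+1}$ every monomial of which is divisible by some coordinate. Such a polynomial is analytic on $\delta\bB_n^{k+1}$ and satisfies $F_v(0)=0$.

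For the surjectivity claim, let $\A$ be the (unital) image of $\rho_{Z,w}$, a subalgebra of $\T_{k+1}$ generated by $T_1,\dots,T_n$ since $\rho_{Z,w}(U)=0$. I would first recover the diagonal. The compression $\Delta$ of $\T_{k+1}$ onto its main diagonal is an algebra homomorphism with $\Delta(T_j)=\diag(z_{ij})_{i=1}^{k+1}$, and as the coordinates are all distinct the points $z_1,\dots,z_{k+1}\in\bB_n$ are distinct; Lagrange interpolation at these points then shows $\Delta(\A)$ is the full diagonal algebra. Lifting the diagonal idempotents modulo the nilpotent ideal of strictly upper-triangular matrices in $\A$ produces orthogonal idempotents $R_1,\dots,R_{k+1}\in\A$ with $\sum_i R_i=I$ and $\Delta(R_i)=E_{ii}$. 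Because the eigenbasis determined by the $R_i$ refines the standard flag, conjugating by the corresponding upper-triangular change of basis leaves $\T_{k+1}$ invariant; so I may assume $R_i=E_{ii}$, whence $\A$ contains every diagonal matrix. I write $\td T_j$ for the generators in this new basis: each keeps its diagonal $z_{ij}$ but acquires possibly new superdiagonal entries $(\td T_j)_{i,i+1}$.

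It remains to capture the radical. Once $\A$ contains all diagonal matrices, $E_{ii}\,\td T_j\,E_{i+1,i+1}=(\td T_j)_{i,i+1}E_{i,i+1}\in\A$, so $E_{i,i+1}\in\A$ as soon as some $(\td T_j)_{i,i+1}\ne 0$; the products $E_{i,i+1}E_{i+1,i+2}\cdots E_{j-1,j}=E_{i,j}$ then place all matrix units in $\A$ and give $\A=\T_{k+1}$. The crux is this non-vanishing, and it is the step I expect to be the main obstacle, because the change of basis hidden in the $R_i$ perturbs the superdiagonal and the value $1-\delta$ is no longer directly visible. To settle it I would compare the $2\times2$ corner of $\td T_j$ on the coordinates $\{i,i+1\}$ with that of the original $T_j$, to which it is conjugate by the triangular change of basis. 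The original corner is $\diag(z_{ij},z_{i+1,j})$ for $j\ne l_i$, with distinct entries since adjacent coordinates differ, and $\begin{sbmatrix} z_{i,l_i}& 1-\delta\\ 0& z_{i+1,l_i}\end{sbmatrix}$ for $j=l_i$. Vanishing of $(\td T_j)_{i,i+1}$ for all $j$ would make every transformed corner diagonal, forcing the new basis vectors to be common eigenvectors of all the original corners. The diagonal corners, available because $n\ge 2$ supplies some $j\ne l_i$, force such a common eigenvector to lie along $e_i$ or $e_{i+1}$, and the corner at $j=l_i$ excludes $e_{i+1}$ precisely because $1-\delta\ne 0$; so the $(i+1)$st new basis vector would lie on $\bC e_i$, contradicting that it has nonzero $e_{i+1}$-component. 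This contradiction yields the required nonzero entry, gives $\A=\T_{k+1}$, and completes the proof.
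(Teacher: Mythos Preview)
Your computation of the $(1,k+1)$ entry is essentially the paper's: the authors phrase it in terms of factorisations $v=v_1 l_1 v_2 l_2\cdots l_k v_{k+1}$, each contributing $(1-\delta)^k\prod_i\theta_{z_i}(S_{v_i})$, which is exactly your path count rewritten.

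For surjectivity, however, you take a genuinely different route. The paper argues very concretely: pick $j\ne l_1$ (using $n\ge2$), so that $T_j$ has zero $(1,2)$ entry and hence commutes with $E_{1,1}$; choose a one-variable polynomial $p$ with $p(z_{1j})=1$ and $p(z_{ij})=0$ for $i\ge2$; then $p(T_j)=E_{1,1}+T$ with $T$ strictly upper triangular and supported in the lower $k\times k$ block, whence $p(T_j)^k=E_{1,1}$. An easy induction on the lower-right corner then puts each $E_{i,i}$, and afterwards each $E_{i,i+1}$, directly into the range. No change of basis is needed: the standard matrix units themselves are exhibited in $\A$.

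Your approach---lift a complete system of idempotents through the nilpotent radical of $\A$, conjugate by an upper-triangular $V$ so that they become the standard $E_{ii}$, then show some $(\td T_j)_{i,i+1}\ne0$ via the two-dimensional subquotients $\spn\{e_1,\dots,e_{i+1}\}/\spn\{e_1,\dots,e_{i-1}\}$---is correct. The subquotient is indeed the right way to justify that the $\{i,i+1\}$ corners of $T_j$ and $\td T_j$ are conjugate (this is not literal restriction of $V$, so a word of explanation would help). Note also that your contradiction quietly uses that the lifted idempotents respect the flag, so that $\bar f_i=\bar e_i$ in the subquotient; this holds because each $R_i$ is upper triangular with diagonal $E_{ii}$, hence $R_ie_i\in e_i+\spn\{e_1,\dots,e_{i-1}\}$, but it is worth saying. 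What your argument buys is a structural template that would transfer to other algebras with nilpotent radical; what the paper's two-line construction buys is brevity and the avoidance of idempotent lifting altogether.
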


\begin{proof}
Observe that $\rho_{Z,w}(S_v)_{1,k+1} = 0$ unless $v$ has the form
\[ v = v_1 l_1 v_2 l_2 \dots l_k v_{k+1} \qfor v_i \in \Fn .\]
Of course, for words of length greater than $k+1$, there may be more than one
such factorization of $v$.
Considering $Z=(z_1, z_2, \dots, z_{k+1})$ with $z_i$ contained in $\delta\bB_{n}^{k+1}$, we
see that the matrix coefficient $ \rho_{Z,w}(S_v)_{1,k+1}$ is the sum of terms of the form
\[
  (1-\delta)^k \prod_{1 \le i \le k+1} \theta_{z_i}(S_{v_i})
\]
Each term is a  monomial of degree $\sum |v_i|  = |v|-k-1$.
In particular, $\rho_{Z,w}(S_w)_{1,k+1} = (1-\delta)^k$.
For $|v|>k+1$, this is a homogeneous polynomial $F_v(z_1,\dots,z_{k+1})$;
so $F_v(0)=0$.

Assume now that the coordinates $\{ z_{ij} :  1\leq i \leq k+1, 1 \le j \le n \}$ are all distinct.
First we show that $E_{1,1}$ is in the range.  Take any $j \ne l_1$.
Pick a polynomial $p$ so that $p(z_{1j}) = 1$ and $p(z_{ij}) = 0$ for $2 \le i \le k+1$.
This is possible since $z_{ij}$ are distinct.
Then $\rho_{Z,w}(S_j) = \diag(z_{ij})_{i=1}^{k+1} + X$ where $X$ is supported on the
first superdiagonal, and since $j\ne l_1$, $X_{1,2}=0$; so $X=E_{1,1}^\perp X E_{1,1}^\perp$.
Therefore, $\rho_{Z,w}(p(S_j)) = E_{1,1} + T$ where $T = E_{1,1}^\perp T E_{1,1}^\perp$
is strictly upper triangular.  Hence $\rho_{Z,w}(p(S_j)^k) = E_{11}$.

One can now similarly show that $E_{i,i}$ lies in the range of $\rho_{Z,w}$ by induction.
It follows easily that each $E_{i,i+1}$ lies in the range of $\rho_{Z,w}$.
Hence the range is all of $\T_{k+1}$.
\end{proof}

The next step is to show that for elements $A\ne 0$ in $\fA_n$,
there are enough representations which are non-zero on $A$.

\begin{lem} \label{L:Znest_reps}
Let $Z \in \bB_{n}^{\infty}$. For any $0 \ne A \in \fA_n$ and any $\ep_i>0$, there
is a word $w$ of length $k\ge2$ and
a sequence $Z' \in \bB_{n}^{k+1}$
with coordinates $\{ z'_{ij} : 1 \leq i \leq k+1, 1 \le j \le n \}$ all distinct
satisfying $\|z'_i-z_i\| < \ep_i$ so that $\rho_{Z',w}(A) \ne 0$.
\end{lem}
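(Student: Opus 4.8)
The plan is to detect a nonzero $A$ through a single, well-chosen matrix entry of $\rho_{Z',w}(A)$, viewed as an analytic function of the perturbed diagonal data $Z'$. I would expand $A$ in its Fourier series $A \sim \sum_{v \in \Fn} a_v S_v$, where $a_v = \langle A\xi_\mt, \xi_v\rangle$; since $A \neq 0$ and these coefficients determine $A$, at least one $a_v$ is nonzero. I would then separate two cases according to whether $A$ has a nonzero Fourier coefficient $a_w$ with $|w| \geq 2$, or whether all nonzero coefficients have length $\le 1$, so that $A = a_\mt I + \sum_j a_j S_j$ is affine.

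In the first case, fix a word $w$ with $a_w \neq 0$ and $k := |w| \geq 2$, choose $\delta$ with $\max_{i \le k+1}\|z_i\| < \delta < 1$, and consider the scalar function $G(Z') := \rho_{Z',w}(A)_{1,k+1}$ on $\delta\bB_n^{k+1}$. For a polynomial this is manifestly a polynomial in $Z'$, and the preceding lemma pins down its value at the origin: among all words, only $w$ contributes to the $(1,k+1)$ entry at $Z'=0$, so $G(0) = a_w(1-\delta)^k \neq 0$. For general $A \in \fA_n$ I would pass from polynomials to $A$ through the Cesàro means $\Sigma_N$ of its Fourier series, which converge to $A$ in norm; since each $\rho_{Z',w}$ is completely contractive, $\|\rho_{Z',w}(\Sigma_N) - \rho_{Z',w}(A)\| \le \|\Sigma_N - A\| \to 0$ uniformly in $Z'$, so $G$ is a locally uniform limit of polynomials, hence analytic, with $G(0) = a_w(1-\delta)^k \neq 0$ surviving in the limit. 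In particular $G$ is not identically zero.

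Being a nonzero analytic function on the connected domain $\delta\bB_n^{k+1}$, $G$ has nowhere dense zero set; together with the finite union of diagonal hyperplanes $\{z'_{ij} = z'_{i'j'}\}$ this set is still nowhere dense, so its complement is dense and open. From it I would select $Z'$ with $\|z'_i - z_i\| < \ep_i$ (taking $Z'$ close enough to $Z$ that it remains in $\delta\bB_n^{k+1}$), with all coordinates distinct, and with $G(Z') \neq 0$; then $\rho_{Z',w}(A) \neq 0$. In the affine case I would instead use a diagonal entry: since the diagonal of $\rho_{Z',w}$ is the direct sum of the characters $\theta_{z'_i,0}$, one has $\rho_{Z',w}(A)_{1,1} = \hat A(z'_1) = a_\mt + \sum_j a_j z'_{1j}$, a nonzero affine function of $z'_1$ whose zero set is a proper hyperplane; picking any word $w$ of length $2$ and perturbing $z'_1$ off this hyperplane (with the other points generic) again gives $\rho_{Z',w}(A) \neq 0$.

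The main obstacle is the passage from polynomials to a general $A \in \fA_n$: everything is transparent for polynomials via the preceding lemma, and the real content is justifying that $G$ remains analytic and that the value $G(0) = a_w(1-\delta)^k$ is unchanged in the limit. This is precisely where complete contractivity of $\rho_{Z',w}$ and the norm convergence of the Cesàro means are used. The affine case is a genuine but minor exception, forced by the fact that the construction places nonzero off-diagonal entries only on the first superdiagonal, so that the $(1,k+1)$ entry is structurally zero on elements of degree $\le 1$.
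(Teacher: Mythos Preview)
Your argument is correct and follows the same core strategy as the paper: detect $A$ via the $(1,k+1)$ entry of $\rho_{Z',w}(A)$, use the preceding lemma to evaluate at $Z'=0$, and then perturb off the zero set of an analytic function (together with the thin set where coordinates collide). Your treatment of the passage from polynomials to general $A$ via Ces\`aro means and complete contractivity is more explicit than the paper's, which simply writes $[\rho_{Z,w}(A)]_{1,k+1}=a_w(1-\delta)^k+F(Z)$ with $F(0)=0$.

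The one genuine difference is the case split. The paper divides according to whether $A$ lies in the commutator ideal $\C$: if $A\notin\C$ then $\hat A$ is a nonzero analytic function on $\bB_n$ and one perturbs $z_1$ to make the $(1,1)$ entry nonzero; if $A\in\C$ then the Fourier series has no constant or linear terms, and one takes $w$ of \emph{minimal} length with $a_w\ne0$. You instead split on whether $A$ has any nonzero coefficient of length $\ge2$, and in that case allow any such $w$ (not necessarily minimal). Both splits work; yours is slightly sharper in that it confines the diagonal-entry argument to the genuinely degenerate affine case, and it makes transparent that minimality of $w$ is not needed, since the preceding lemma already kills the $(1,k+1)$ contribution of every word of length $\le k$ other than $w$. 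Conversely, the paper's split has the advantage that the first case is handled by the one-variable Gelfand transform without touching Fourier coefficients at all. A final small point: your ``nowhere dense zero set'' is the correct formulation for $n\ge2$, where the paper's ``isolated zeros'' is an overstatement, though harmless for the argument.
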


\begin{proof}
If $A$ is not in the commutator ideal, then $\hat A$ is a non-zero analytic function on $\bB_n$.
Thus it has only isolated zeros.  Therefore one can modify $z_1$ by a sufficiently small
perturbation to satisfy all of the requirements and make $\rho_{Z',w}(A)_{1,1} \ne 0$ for
arbitrary choice of $w$.

So suppose that $A \in \C$.
Then the Fourier series $\sum a_w S_w$ of $A$ has no constant or linear terms.
Let $w$ be a word of minimal length such that $a_w \ne 0$.
Then by the previous lemma, $\rho_{Z,w}(S_w) = (1-\delta)^k$, for an appropriate $0\le\delta\le1$, and
$[\rho_{Z,w}(A)]_{1,k+1} = a_w (1-\delta)^k + F(z_1,\dots,z_{k+1})$,
where $F$ is an analytic function, defined on $\delta\bB_{n}^{k+1}$, with 0 constant term.
This matrix entry might vanish at $(z_1,\dots,z_{k+1})$, but will then necessarily
be non-constant.
So it will be non-zero at most points $(z'_1,\dots,z'_{k+1})$ arbitrarily
near to $(z_1,\dots,z_{k+1})$.  So one may choose $Z'$ accordingly.
\end{proof}

If $Z=(z_1, z_2, \dots)$ and $Z'=(z'_1, z'_2, \dots, z'_l)$ belong
to $\bB_{n}^{\infty}$ and $\bB_{n}^{l}$ respectively, then
we write $d(Z,Z') := \sup\{ \|z_i-z'_i\| : 1\le i \le  l\}$.

We come to the main result of this section, which shows that $\fJ_\phi$
is intrinsically defined.

\begin{thm}  \label{nice}
If $Z\in \bB_{n}^{\infty}$, then the ideal
\[
 \fI_Z := \bigcap_{l \ge2} \bigcup_{\ep>0}
 \bigcap \{ \ker \rho : \rho\in\N_{Z',l},\ Z'\in\bB_{n}^{l},\ d(Z',Z)<\ep\ \}
\]
equals $\fJ_{\phi}$ whenever $\hat\phi(Z) \cap Z = \mt$;
and it is always contained in $\fJ_{\phi}$. Therefore
$\fJ_{\phi}=\bigcup\fJ_Z$.
\end{thm}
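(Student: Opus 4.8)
The plan is to prove the two inclusions separately, exploiting the single fact that every representation appearing in the definition of $\fI_Z$ annihilates $U$. The explicit representations $\rho_{Z',w}$ are built with $\rho_{Z',w}(U)=0$, and under the disjointness hypothesis Lemma \ref{L:zeroU} forces $\rho(U)=0$ for \emph{every} $\rho\in\N_{Z',l}$. In either guise, such a $\rho$ kills the ideal $\fJ_\phi$ generated by $U$, so it factors through the quotient map $\pi\colon \fA_n\times_\phi\bZ^+\to\fA_n$ whose kernel is exactly $\fJ_\phi$. Thus $\rho(A)$ depends only on $\pi(A)\in\fA_n$, and the whole analysis reduces to detecting nonzero elements of $\fA_n$ by the representations $\rho_{Z',w}$. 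Unwinding the quantifiers, $A\in\fI_Z$ means precisely: for every $l\ge2$ there is an $\ep>0$ so that $\rho(A)=0$ for all $\rho\in\N_{Z',l}$ with $d(Z',Z)<\ep$.

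For the inclusion $\fI_Z\subseteq\fJ_\phi$, which holds unconditionally, I would argue by contraposition. Suppose $A\notin\fJ_\phi$, so $A_0:=\pi(A)\ne0$ in $\fA_n$. Apply Lemma \ref{L:Znest_reps} to $A_0$: it supplies a word $w$ and, crucially, a \emph{fixed} length $k=|w|$ depending on $A_0$ alone (an arbitrary word when $A_0\notin\C$, the minimal nonzero Fourier word when $A_0\in\C$) so that for every prescribed tolerance there is a $Z'$ with distinct coordinates and $d(Z',Z)$ as small as we like for which $\rho_{Z',w}(A_0)\ne0$. Since $\rho_{Z',w}$ annihilates $\fJ_\phi$ we obtain $\rho_{Z',w}(A)=\rho_{Z',w}(A_0)\ne0$, with $\rho_{Z',w}\in\N_{Z',k+1}$. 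Hence the inner set attached to the single value $l=k+1$ omits $A$ at every scale $\ep$, whence $A\notin\fI_Z$.

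For the reverse inclusion under $\hat\phi(Z)\cap Z=\mt$, fix $l\ge2$. The finitely many inequalities $z_i\ne\hat\phi(z_j)$, $1\le i,j\le l$, are strict, and since $\hat\phi$ is continuous on $\bB_n$ each persists under small perturbation; hence there is an $\ep>0$ with $Z'\cap\hat\phi(Z')=\mt$ whenever $d(Z',Z)<\ep$. By Lemma \ref{L:zeroU} every $\rho\in\N_{Z',l}$ with $d(Z',Z)<\ep$ satisfies $\rho(U)=0$, so $\fJ_\phi\subseteq\ker\rho$. As $l$ was arbitrary, $\fJ_\phi\subseteq\fI_Z$, giving equality. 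Finally, since $\hat\phi$ is analytic the condition $\hat\phi(Z)\cap Z=\mt$ is generic, so there exist $Z$ with $\fI_Z=\fJ_\phi$; as every $\fI_Z\subseteq\fJ_\phi$, taking the union over $Z$ yields $\fJ_\phi=\bigcup_Z\fI_Z$.

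The main obstacle I anticipate is the quantifier bookkeeping in the first inclusion: the definition of $\fI_Z$ fixes the matrix size $l$ \emph{before} letting $\ep\to0$, so it is essential that the detecting representation can be taken of one fixed size $l=k+1$ for all tolerances simultaneously. This is exactly what the structure of Lemma \ref{L:Znest_reps} delivers, since the relevant word length is governed by $A_0$ and not by the perturbation scale. A secondary point to verify is that the perturbed $Z'$ remain inside the ball $\delta\bB_n$ on which $\rho_{Z',w}$ is defined and completely contractive, which is automatic for $Z$ in the open ball.
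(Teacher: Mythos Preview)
Your proposal is correct and follows essentially the same route as the paper: Lemma~\ref{L:zeroU} plus a continuity argument gives $\fJ_\phi\subseteq\fI_Z$ under the disjointness hypothesis, while Lemma~\ref{L:Znest_reps} (with the key observation that the detecting size $l=k+1$ depends only on $A_0=\pi(A)$ and not on $\ep$) gives the unconditional inclusion $\fI_Z\subseteq\fJ_\phi$. Your remark that $\rho_{Z',w}(U)=0$ holds \emph{by construction}, so that the converse inclusion needs no disjointness at all, is exactly right and in fact slightly cleaner than the paper's phrasing of that step.
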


\begin{proof}
If $\hat\phi(Z) \cap Z = \mt$, then for each fixed $l$ and sufficiently small $\ep>0$,
one has $\hat\phi(\{z'_1,\dots,z'_l\}) \cap (\{z'_1,\dots,z'_l\}) = \mt$ when $d(Z',Z)<\ep$.
For all $\rho\in\N_{Z',l}$, one has $\rho(U)=0$ by Lemma~\ref{L:zeroU}.
So $\ker\rho$ contains $\fJ_{\phi}$. Hence $\fI_Z$ contains $\fJ_{\phi}$.

Conversely, if $X \not\in \fJ_{\phi}$, then $q(X) = A \ne 0$.
Lemma~\ref{L:Znest_reps} shows that there is a $k \geq 2$ and a word $w$
of length $k$ so that for any $\ep>0$, there is a $Z'$ in $\bB_{n}^{k+1}$ with $d(Z',Z)<\ep$ and
$\rho_{Z',w} \in \N_{Z',k+1}$ so that $\rho_{Z',w}(X) \ne 0$.
Hence for $l=k+1$, we have
\[
X \notin \bigcup_{\ep>0}
 \bigcap \{ \ker \rho : \rho\in\N_{Z',l},\ Z'\in\bB_{n}^{l},\ d(Z',Z)<\ep\ \}
\]
and therefore $X \notin \fI_Z$.
So $\fI_Z = \fJ_{\phi}$.

Since $\fI_Z = \fJ_{\phi}$ we obtain $\fJ_{\phi}  \subset \bigcup \fI_Z$.
Even if $\hat\phi(Z) \cap Z \ne \mt$, for any $l\ge1$, there will always be $Z'\in\Z$
with $d(Z',Z)<\ep$ and $\hat\phi(\{z'_1,\dots,z'_l\}) \cap (\{z'_1,\dots,z'_l\} = \mt$.
So using Lemma~\ref{L:Znest_reps} once again, we see that if $X \not\in\fJ_{\phi}$, then $X \not\in\fI_Z$.
So $\fI_Z$ is always a subset of $\fJ_{\phi}$.
\end{proof}

\section{The main result.}   \label{sec:main}

If $\gamma:\fB_1 \rightarrow \fB_2$
is an isomorphism between algebras $\fB_1$ and $\fB_2$, then $\gamma$ induces isomorphisms,
\begin{alignat*}{2}
\gamma_c&:\M_{\fB_1} \to \M_{\fB_2}
&\quad\text{by}\quad& \gamma_c(\theta) =  \theta\circ\gamma^{-1}
\\ \gamma_r&:\rep_{\fT_2}  \fB_1 \to \rep_{\fT_2}  \fB_2
&\quad\text{by}\quad& \gamma_r(\pi) =  \pi\circ\gamma^{-1}
\end{alignat*}
which are compatible in the sense that
\[
 \gamma_c (\theta_{\pi, i}) = \theta_{\gamma_r(\pi),i}
 \qfor i=1,2 \AND \pi \in \rep_{\fT_2}  \fB_1.
\]

 Let $\phi_1$ and $ \phi_2$ be isometric automorphisms of $\fA_n$ and assume that
 $\gamma : \fA_n \times_{\phi_1} \bZ^+   \rightarrow \fA_n \times_{\phi_2} \bZ^+$
is an isomorphism. We can now define a third map $\gamma_s :  \bB_n\rightarrow \bB_n$
 associated with $\gamma$, by the formula
 \[
 \gamma_s(z) = (\theta_{z,0}\circ \gamma^{-1})^{(n)}[S_1, S_2, \dots S_n], \qquad z \in \bB_n.
 \]
An easy approximation argument shows that $\gamma_s$ is a holomorphic map.
 
\begin{lem} \label{Ligo}
Let $\phi_1 , \phi_2$ be non-trivial isometric automorphisms of $\fA_n$.
Assume that there exists an isomorphism
$\gamma : \fA_n \times_{\phi_1} \bZ^+   \rightarrow \fA_n \times_{\phi_2} \bZ^+$.
If $\gamma_{s}$ and $\gamma_c$ are defined as above, then
\[
 \gamma_c(\bB_n\times \{0\} )=  \bB_n\times \{0\}
\]
Hence, $\gamma_s^{-1}=(\gamma^{-1})_s$ and so
$\gamma_s$ is a biholomorphism.
Furthermore, the fixed point sets in the open ball, $F_0(\hat\phi_i)$, satisfy
\[
 \gamma_s((F_0(\hat\phi_1)) = F_0(\hat\phi_2).
\]
\end{lem}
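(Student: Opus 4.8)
The plan is to transport the description of the maximal analytic sets from Lemma~\ref{useful} across the isomorphism $\gamma$. First I would record the general principle that $\gamma_c$ is a homeomorphism of $\fM_{\fA_n\times_{\phi_1}\bZ^+}$ onto $\fM_{\fA_n\times_{\phi_2}\bZ^+}$ which respects analytic structure: for $b$ in the target algebra and any analytic parametrization $h$ of an analytic set, one has $\hat b\circ\gamma_c\circ h=\widehat{\gamma^{-1}(b)}\circ h$, which is analytic. Hence $\gamma_c$ carries each maximal analytic set biholomorphically onto a maximal analytic set, and so it permutes the four families listed in Lemma~\ref{useful}.

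Next I would identify the image of $\bB_n\times\{0\}$. Since $\gamma_c$ restricts to a biholomorphism, its image is a maximal analytic set biholomorphic to $\bB_n$, in particular of complex dimension $n$. Scanning the list in Lemma~\ref{useful}, and using that $\phi_2$ is non-trivial so that $F_0(\hat\phi_2)$ is an affine slice of dimension at most $n-1$, the only members of dimension $n$ are $\bB_n\times\{0\}$ itself and, in the borderline case $\dim F_0(\hat\phi_2)=n-1$, the set $F_0(\hat\phi_2)\times\bD\cong\bB_{n-1}\times\bD$. The main obstacle is to exclude this last possibility: that is exactly the assertion that the ball $\bB_n$ is not biholomorphically equivalent to the reducible product domain $\bB_{n-1}\times\bD$ (for $n=2$ this is Poincar\'e's theorem that $\bB_2\not\cong\bD\times\bD$), and it is here that the theory of several complex variables is needed. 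Granting this, $\gamma_c(\bB_n\times\{0\})=\bB_n\times\{0\}$, that is, $\gamma_c(\theta_{z,0})=\theta_{\gamma_s(z),0}$ with $\gamma_s(z)\in\bB_n$.

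With $\bB_n\times\{0\}$ preserved, the biholomorphy of $\gamma_s$ is formal. Applying the previous paragraph to $\gamma^{-1}$ gives $(\gamma^{-1})_c(\theta_{w,0})=\theta_{(\gamma^{-1})_s(w),0}$ with $(\gamma^{-1})_s(w)\in\bB_n$; since $(\gamma^{-1})_c=\gamma_c^{-1}$, composing the two identities and using that a character $\theta_{\,\cdot\,,0}$ is determined by its first coordinate yields $(\gamma^{-1})_s\circ\gamma_s=\id$ and $\gamma_s\circ(\gamma^{-1})_s=\id$ on $\bB_n$. As both $\gamma_s$ and $(\gamma^{-1})_s$ are holomorphic, we conclude $\gamma_s^{-1}=(\gamma^{-1})_s$ and that $\gamma_s$ is a biholomorphism.

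Finally I would pin down the fixed-point sets by an intrinsic, closure-based characterization that $\gamma_c$ must respect. For $x\in\bB_n$, the point $\theta_{x,0}$ lies in the closure of some maximal analytic set \emph{other than} $\bB_n\times\{0\}$ precisely when $x\in F_0(\hat\phi)$: if $x\in F_0(\hat\phi)$ then $\theta_{x,0}=\lim_{\lambda\to0}\theta_{x,\lambda}$ lies in $\ol{F_0(\hat\phi)\times\bD}$, whereas if $x\notin F_0(\hat\phi)$ then, because $F_0(\hat\phi)$ is relatively closed in $\bB_n$ while the remaining families sit over $\partial\bB_n$ or have second coordinate of modulus one, $\theta_{x,0}$ meets the closure of no other family. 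Since $\gamma_c$ is a homeomorphism permuting the maximal analytic sets and fixing $\bB_n\times\{0\}$, it preserves this property, so $\gamma_c(F_0(\hat\phi_1)\times\{0\})=F_0(\hat\phi_2)\times\{0\}$. Reading off the first coordinate via $\gamma_c(\theta_{z,0})=\theta_{\gamma_s(z),0}$ then gives $\gamma_s(F_0(\hat\phi_1))=F_0(\hat\phi_2)$, as required.
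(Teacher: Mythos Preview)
Your argument is correct and follows essentially the same route as the paper: use Lemma~\ref{useful} to list the maximal analytic sets, single out $\bB_n\times\{0\}$ as the unique one of dimension $n$ that is not biholomorphic to a product domain, and then recover $F_0(\hat\phi_i)$ from how the other maximal analytic sets meet $\bB_n\times\{0\}$. The only cosmetic differences are that the paper quotes Ligocka's theorem on biholomorphisms of product domains (in place of your direct appeal to $\bB_n\not\cong\bB_{n-1}\times\bD$) and characterizes $F_0(\hat\phi_i)\times\{0\}$ more simply as the \emph{intersection} of $\bB_n\times\{0\}$ with the union of the remaining maximal analytic sets, rather than via closures.
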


\begin{proof} As $\phi_i \ne \id$, Lemma~\ref{useful} shows that $\bB_n \times \{0\}$
is distinguished from the other maximal analytic subsets of the maximal ideal
space of $\fA_n \times_{\phi_i} \bZ^+$ as
the only maximal analytic set of dimension $n$ which is not a product domain.
A result by Ligocka \cite[Theorem 2]{Lig} states that a biholomorphic map
between products of domains with boundaries of class $C^2$ (which our sets
clearly have) must preserve the number of factors. Hence
$\gamma_c(\bB_n \times \{0\} )= \bB_n \times \{0\}$.  It follows that
$\gamma_s^{-1}=(\gamma^{-1})_s$, i.e., $\gamma_s$ is a biholomorphism.

Now $F_0(\hat\phi_i)$ is determined as the intersection of $\bB_n \times \{0\}$
with the union of all other maximal analytic sets.
It follows that
\[ \gamma_s((F_0(\hat\phi_1)) = F_0(\hat\phi_2) .\qedhere \]
\end{proof}

We now show that $\fJ_{\phi}$ is invariant under isomorphisms.

\begin{cor}   \label{usefultoo}
Assume that $\phi \ne \id$ is an automorphism of $\bB_n$.
Then $\fJ_{\phi} = U (\fA_n \times_\phi \bZ^+)$ is an algebraic invariant of
$\fA_n \times_\phi \bZ^+$ given by $\fJ_{\phi} = \bigcup \fI_Z$.
\end{cor}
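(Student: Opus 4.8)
The plan is to establish the invariance directly: given any isomorphism $\gamma : \fA_n \times_{\phi_1} \bZ^+ \to \fA_n \times_{\phi_2} \bZ^+$ between two such semicrossed products with $\phi_1,\phi_2 \ne \id$, I would show $\gamma(\fJ_{\phi_1}) = \fJ_{\phi_2}$. Theorem~\ref{nice} already supplies the essential ingredient, namely the \emph{intrinsic} description $\fJ_{\phi} = \bigcup_Z \fI_Z$, in which each $\fI_Z$ is built only from kernels of nest representations in the families $\N_{Z',l}$. Since these families are specified by purely representation-theoretic data (a nest representation onto $\fT_l$ together with the position of its diagonal characters), it suffices to show that $\gamma$ carries the data defining $\fI_Z$ in the first algebra onto that defining $\fI_{\gamma_s(Z)}$ in the second.

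First I would record how $\gamma$ acts on nest representations. The maps $\gamma_c,\gamma_r$ of Section~\ref{sec:main} and their compatibility $\gamma_c(\theta_{\pi,i}) = \theta_{\gamma_r(\pi),i}$ extend verbatim from $\fT_2$ to every $\fT_l$. As $\gamma_r(\pi)=\pi\circ\gamma^{-1}$ and $\gamma^{-1}$ is surjective, $\gamma_r(\pi)$ has the same image algebra as $\pi$ and so is again a nest representation onto $\fT_l$, with $\ker\gamma_r(\pi)=\gamma(\ker\pi)$; moreover $(\gamma^{-1})_r$ inverts $\gamma_r$, so $\gamma_r$ is a bijection of $\rep_{\fT_l}$. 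Because $\phi_i\ne\id$, Lemma~\ref{Ligo} gives $\gamma_c(\bB_n\times\{0\})=\bB_n\times\{0\}$, whence $\gamma_c(\theta_{z,0})=\theta_{\gamma_s(z),0}$ with $\gamma_s$ a biholomorphism of $\bB_n$. Combining this with the compatibility relation, $\pi$ lies in $\N_{Z',l}$ if and only if $\gamma_r(\pi)$ lies in $\N_{\gamma_s(Z'),l}$, where $\gamma_s(Z')=(\gamma_s(z'_1),\dots,\gamma_s(z'_l))$; thus $\gamma_r$ restricts to a bijection $\N_{Z',l}\to\N_{\gamma_s(Z'),l}$.

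Next I would push this through the definition in Theorem~\ref{nice}. Being a linear bijection, $\gamma$ commutes with the intersections and unions, so
\[
 \gamma(\fI_Z) = \bigcap_{l\ge2}\ \bigcup_{\ep>0}\ \bigcap\{\ker\sigma : \sigma\in\N_{\gamma_s(Z'),l},\ Z'\in\bB_n^l,\ d(Z',Z)<\ep\}.
\]
Since $\gamma_s$ and $\gamma_s^{-1}$ are continuous, $\gamma_s(\{Z':d(Z',Z)<\ep\})$ is a neighbourhood of $W:=\gamma_s(Z)$ that shrinks to $\{W\}$ as $\ep\to0$, and conversely every neighbourhood of $W$ contains such an image. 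Hence the inner $\bigcup_\ep\bigcap$ indexed by $Z'$ near $Z$ coincides with the corresponding $\bigcup_{\ep'}\bigcap$ indexed by points near $W$, giving $\gamma(\fI_Z)=\fI_{W}$ computed in $\fA_n\times_{\phi_2}\bZ^+$. Taking the union over all $Z\in\bB_n^\infty$ and using that $\gamma_s$ is a bijection of $\bB_n$, and hence of $\bB_n^\infty$ coordinatewise, yields $\gamma(\fJ_{\phi_1})=\bigcup_Z\fI_{\gamma_s(Z)}=\fJ_{\phi_2}$.

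The main obstacle is the bookkeeping in the last step: one must match the two nested systems of neighbourhoods under $\gamma_s$ so that the $\bigcup_\ep\bigcap$ structures agree exactly, and verify that $\gamma_r$ maps \emph{onto} $\N_{\gamma_s(Z'),l}$ rather than merely into it, which is where $(\gamma^{-1})_r$ and $\gamma_s^{-1}$ are used. A second, minor point is to confirm that $\phi_1\ne\id$ forces $\phi_2\ne\id$, so that Lemma~\ref{Ligo} applies on both sides: by Lemma~\ref{useful} the identity automorphism would produce a maximal analytic set of dimension $n+1$, while a non-trivial automorphism produces none of dimension exceeding $n$, and this maximal dimension is preserved by any isomorphism.
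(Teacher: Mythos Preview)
Your proposal is correct and follows essentially the same approach as the paper's proof, which is very terse: the paper simply observes that the invariance of $\bB_n\times\{0\}$ makes sequences in $\bB_n^\infty$ intrinsic, and then appeals to Theorem~\ref{nice} together with the uniform continuity of $\gamma_c$. You have spelled out exactly those details---the bijection $\gamma_r:\N_{Z',l}\to\N_{\gamma_s(Z'),l}$, the matching of the $\bigcup_\ep\bigcap$ neighbourhood systems under $\gamma_s$ and $\gamma_s^{-1}$, and the observation that $\phi_1\ne\id$ forces $\phi_2\ne\id$---that the paper leaves to the reader.
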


\begin{proof}
The invariance of $\bB_n\times \{0\}$ shows that sequences
in $\bB_{n}^{\infty}$ are intrinsically defined in the character space of
$\fA_n \times_\phi \bZ^+$. The proof follows now from Theorem \ref{nice}
and the uniform continuity of $\gamma_c$.
\end{proof}

The above results allow us to focus on a special class of nest representations.
Let $\phi$ be an automorphism of the non-commutative disc algebra $\fA_n$.
We denote by $\srep_{\fT_2} \fA_n \times_\phi \bZ^+$ the subset of
$\rep_{\fT_2} \fA_n \times_\phi \bZ^+$ consisting of all nest representations
$ \pi \in \rep_{\fT_2} \fA_n \times_\phi \bZ^+$
such that $\theta_{\pi, 1}, \theta_{\pi, 2} \in \bB_n\times \{ 0\}$,
$\theta_{\pi, 2}|_{\fA_n} \notin \Fix(\hat\phi)$ and $\pi(U)\neq 0$.
By Lemma \ref{useful} and Corollary \ref{usefultoo},
$\srep_{\fT_2} \fA_n \times_\phi \bZ^+$ is preserved by the
dual action of isomorphisms between semicrossed products of the
non-commutative disc algebra $\fA_n$.

Let $\pi \in \srep_{\fT_2} \fA_n \times_\phi \bZ^+$ with
$\pi(U) =  \left(\begin{smallmatrix} 0&c\\0&0 \end{smallmatrix}\right)$, $c\neq0$.
Applying $\pi$ now to the covariance relation
\[
S_iU =U\phi(S_i), \quad 1\leq i \leq n,
\]
and comparing $(1,2)$ entries, we conclude that $\theta_{\pi, 1} =
\hat{\phi}(\theta_{\pi, 2} )$. The situation is parallel to the
$n=1$ case and the techniques of \cite{DKconj} are now applicable.

\begin{thm} \label{main}
Let $\phi_1$ and $\phi_2$ be automorphisms of the
non-comm\-uta\-tive disc algebra $\fA_n$, $n \geq 2$.  Then
the semicrossed products $\fA_n \times_{\phi_1} \bZ^+$ and $\fA_n \times_{\phi_2} \bZ^+$ are isomorphic as algebras
if and only if $\phi_1$ and $\phi_2$ are conjugate
via an automorphism of $\fA_n$.
\end{thm}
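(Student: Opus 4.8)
The plan is to prove both implications, with the converse carrying essentially all of the content. For the \emph{if} direction, suppose $\phi_2 = \alpha\phi_1\alpha^{-1}$ for some $\alpha\in\Aut(\fA_n)$. Writing $U_i$ for the universal contraction of $\fA_n\times_{\phi_i}\bZ^+$, I would define $\gamma$ on generators by $\gamma(A)=\alpha(A)$ for $A\in\fA_n$ and $\gamma(U_1)=U_2$, and check that the covariance relation is respected: $\alpha(A)U_2 = U_2\phi_2(\alpha(A)) = U_2\alpha(\phi_1(A))$ is exactly the image of $AU_1 = U_1\phi_1(A)$. The universal property of Definition~\ref{def:sem} then yields a completely contractive homomorphism; running the same construction with $\alpha^{-1}$ produces its inverse, so $\gamma$ is an isomorphism.

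For the converse, let $\gamma:\fA_n\times_{\phi_1}\bZ^+\to\fA_n\times_{\phi_2}\bZ^+$ be an isomorphism. First I would dispose of the trivial automorphism. By Lemma~\ref{useful}, when $\phi=\id$ the set $F_0(\phi)\times\bD = \bB_n\times\bD$ is a maximal analytic set of complex dimension $n+1$, whereas for $\phi\neq\id$ every maximal analytic set has dimension at most $n$, since a nontrivial conformal automorphism cannot fix an open subset of $\bB_n$ and hence $\dim F_0(\phi)\le n-1$. As $\gamma_c$ is a homeomorphism of character spaces carrying maximal analytic sets to maximal analytic sets of the same dimension, $\phi_1=\id$ if and only if $\phi_2=\id$, in which case both are the identity and the conclusion is immediate. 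So I assume $\phi_1,\phi_2\neq\id$.

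The strategy is then to extract the conjugating automorphism from $\gamma_s$ and verify the conjugacy on the level of $\Aut(\bB_n)$ using the nest representations $\srep_{\fT_2}$. By Lemma~\ref{Ligo}, $\gamma_s$ is a biholomorphism of $\bB_n$, hence a conformal automorphism, so it corresponds to a unique $\alpha\in\Aut(\fA_n)$ under the identification $\Aut(\fA_n)\cong\Aut(\bB_n)$. The key point is that for $\pi\in\srep_{\fT_2}\fA_n\times_{\phi_1}\bZ^+$ the covariance relation forces $\theta_{\pi,1}=\hat\phi_1(\theta_{\pi,2})$, so the diagonal data of these representations traces out the graph of $\hat\phi_1$; and by Lemma~\ref{useful} with Corollary~\ref{usefultoo}, $\gamma_r$ maps $\srep_{\fT_2}\fA_n\times_{\phi_1}\bZ^+$ into $\srep_{\fT_2}\fA_n\times_{\phi_2}\bZ^+$. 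Feeding the compatibility relation $\gamma_c(\theta_{\pi,i})=\theta_{\gamma_r(\pi),i}$ into the two graph identities and using that $\gamma_c$ restricts to $\gamma_s$ on $\bB_n\times\{0\}$, I would read off $\gamma_s(\hat\phi_1(z))=\hat\phi_2(\gamma_s(z))$ for every $z$ supporting such a representation.

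The step I expect to be the main obstacle is producing, for each $z\in\bB_n$ with $\hat\phi_1(z)\neq z$, a genuine $2\times2$ nest representation in $\srep_{\fT_2}$ with $\theta_{\pi,2}=\theta_{z,0}$ and $\pi(U)\neq 0$. I would try $\pi(S_i)=\diag\big((\hat\phi_1(z))_i,\,z_i\big)+b_iE_{12}$ and $\pi(U)=cE_{12}$; the choice of diagonal makes the covariance relation hold automatically, since both sides of $S_iU=U\phi_1(S_i)$ collapse to $c\,(\hat\phi_1(z))_i\,E_{12}$, while small $b_i,c$ keep $[\pi(S_1)\ \dots\ \pi(S_n)]$ a strict row contraction, and $\hat\phi_1(z)\neq z$ gives distinct diagonal entries so that, together with $\pi(U)=cE_{12}$, the range is all of $\fT_2$. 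This realizes $\gamma_s\circ\hat\phi_1=\hat\phi_2\circ\gamma_s$ on the dense open set $\bB_n\setminus F_0(\phi_1)$, and analyticity of both sides propagates it to all of $\bB_n$. Finally, since $\phi\mapsto\hat\phi$ is a group anti-isomorphism, the identity $\hat\alpha\circ\hat\phi_1=\hat\phi_2\circ\hat\alpha$ rewrites as $\widehat{\phi_1\alpha}=\widehat{\alpha\phi_2}$, whence $\phi_2=\alpha^{-1}\phi_1\alpha$; thus $\phi_1$ and $\phi_2$ are conjugate via an automorphism of $\fA_n$, as required.
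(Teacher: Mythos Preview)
Your proposal is correct and follows essentially the same route as the paper: dispose of $\phi=\id$ via the dimension of maximal analytic sets, use Lemma~\ref{Ligo} to get the biholomorphism $\gamma_s$, invoke the invariance of $\srep_{\fT_2}$ under $\gamma_r$ (from Lemma~\ref{useful} and Corollary~\ref{usefultoo}), and read off the conjugacy $\gamma_s\circ\hat\phi_1=\hat\phi_2\circ\gamma_s$ from the graph relation $\theta_{\pi,1}=\hat\phi(\theta_{\pi,2})$. The only cosmetic differences are that you spell out the easy ``if'' direction and the explicit $2\times2$ covariant pair (the paper just says ``a standard dilation argument''), and you extend the conjugacy across $F_0(\phi_1)$ by analyticity whereas the paper handles fixed points directly via $\gamma_s(F_0(\hat\phi_1))=F_0(\hat\phi_2)$; your care with the anti-isomorphism $\phi\mapsto\hat\phi$ at the end is also a point the paper leaves implicit.
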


\begin{proof}
Firstly, one can recognize $\phi = \id$ from the fact that $\bB_n \times \bD$ is the unique
maximal analytic set of largest dimension.  So we may suppose that $\phi_i \ne \id$.

Assume that there exists an isometric isomorphism
\[
\gamma: \fA_n \times_{\phi_1} \bZ^{+}\longrightarrow \fA_n \times_{\phi_2} \bZ^{+}.
\]
Then by Corollary~\ref{usefultoo}, $\gamma$ induces a map $\tilde\gamma$ of
$\fA_n \simeq \fA_n \times_{\phi_1} \bZ^+/ \fJ_{\phi_1}$ onto
$\fA_n \simeq \fA_n \times_{\phi_2} \bZ^+/ \fJ_{\phi_2}$.
This in turn determines a conformal automorphism $\hat\gamma$ of $\bB_n$.
We will show that $\hat\gamma \circ\phi_1=  \phi_2 \circ \hat\gamma$ by establishing that
\begin{equation} \label{important}
\theta \circ \gamma^{-1} \circ\phi_2=\theta \circ \gamma^{-1}  =
\theta \circ \phi_1\circ \gamma^{-1}
\qforal \theta \in \bB_n .
\end{equation}
If $\theta \in F_0(\phi_1)$, then by Lemma \ref{useful},
$\theta \circ \gamma^{-1} \in F_0(\phi_2)$.
Hence,
\[
\theta \circ \gamma^{-1} \circ\phi_2=\theta \circ \gamma^{-1}  =
 \theta \circ \phi_1\circ \gamma^{-1},
\]
and so (\ref{important}) is valid in that case.

Assume that $\theta \notin F_0(\phi_1)$. A standard dilation argument (and
the universality of the semicrossed product) imply
the existence of a representation
$\pi \in \srep_{\fT_2} \fA_n \times_{\phi_1} \bZ^+$ so that
$\theta_{\pi, 2} = \theta$. But then
\[
\pi\circ \gamma^{-1} \in  \srep_{\fT_2} \fA_n \times_{\phi_1} \bZ^+.
\]
Hence, by our earlier discussion
\[
\theta_{\pi\circ\gamma^{-1}, 2}\circ\phi_2= \theta_{\pi \circ \gamma_{-1}, 1}.
\]
Since
\[
\theta_{\pi\circ\gamma^{-1}, 2}= \theta_{\pi, 2}\circ \gamma^{-1}
\]
and
\[
\theta_{ \pi\circ \gamma^{-1}, 1} =  \theta_{ \pi , 1}\circ\gamma^{-1}
= \theta_{ \pi , 2}\circ \phi_1 \circ \gamma^{-1}
\]
we once again obtain (\ref{important}) and the proof is complete.
\end{proof}

\section{Semicrossed products of $\A_d$}

A result similar to that of Theorem \ref{main} also holds in the case
of the $d$-shift algebras.

\begin{thm}  \label{secondmain}
If $\phi_1$ and $\phi_2$ are automorphisms of the
norm closed $d$-shift algebra $\A_d$, $d \geq 2$, then
the semicrossed products $\A_d \times_{\phi_1} \bZ^+$ and
$\A_d \times_{\phi_2} \bZ^+$ are isomorphic as algebras
if and only if $\phi_1$ and $\phi_2$ are conjugate
via an automorphism of $\A_d$.
\end{thm}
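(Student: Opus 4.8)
The plan is to run the proof of Theorem~\ref{main} with $\A_d$ in place of $\fA_n$, the point being that every structural input transfers, with exactly one genuine simplification and one genuine verification. First I would record that, as already noted in Section~\ref{sec:character}, $\A_d$ is the quotient of $\fA_d$ by its commutator ideal, the induced quotient $\fA_d\times_\phi\bZ^+ \to \A_d\times_\Phi\bZ^+$ is compatible with the Gelfand picture, and the two semicrossed products share the same character space $\fM = (\ol{\bB}_d\times\{0\})\cup(\Fix(\phi)\times\ol{\bD})$ and the same maximal analytic sets. Thus Lemma~\ref{useful} holds verbatim for $\A_d\times_\Phi\bZ^+$, the case $\Phi=\id$ is separated exactly as before (by $\bB_d\times\bD$ being the unique maximal analytic set of top dimension), and the Ligocka argument of Lemma~\ref{Ligo} applies word for word, so that any isomorphism $\gamma\colon \A_d\times_{\Phi_1}\bZ^+\to\A_d\times_{\Phi_2}\bZ^+$ induces a conformal $\hat\gamma\in\Aut(\bB_d)$ with $\gamma_c(\bB_d\times\{0\})=\bB_d\times\{0\}$ and $\gamma_s(F_0(\hat\phi_1))=F_0(\hat\phi_2)$.

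The simplification replaces all of Section~\ref{S:ncdisc}. Because $\A_d$ is a commutative function algebra whose Gelfand transform is injective --- it is realised as multipliers on the Drury--Arveson space, so an element is determined by its values $\theta_z$, $z\in\bB_d$ --- the ideal $\fJ_\Phi = U(\A_d\times_\Phi\bZ^+)$, being the kernel of the quotient onto $\A_d$, is recovered from characters alone:
\[
 \fJ_\Phi = \bigcap\{\,\ker\theta_{z,0} : z\in\bB_d\bsl\Fix(\phi)\,\}.
\]
Indeed $\theta_{z,0}(U)=0$ gives $\subseteq$, while if $X\notin\fJ_\Phi$ then its image $A\in\A_d$ is nonzero, so $\hat A\not\equiv0$ and $\theta_{z,0}(X)=\hat A(z)\ne0$ for some $z\in\bB_d$, which may be taken off the proper analytic set $\Fix(\phi)$. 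Since $\bB_d\times\{0\}$ and $F_0(\phi)=\Fix(\phi)\cap\bB_d$ are intrinsic by the previous paragraph and each $\theta_{z,0}$ with $z\notin\Fix(\phi)$ is the unique character over $z$, the family $\{\theta_{z,0}\}$ and hence $\fJ_\Phi$ are algebraic invariants, so the nest-representation machinery of Section~\ref{S:ncdisc} is not needed for this step. Consequently $\gamma$ carries $\fJ_{\Phi_1}$ to $\fJ_{\Phi_2}$ and induces an automorphism of $\A_d\simeq\A_d\times_{\Phi_i}\bZ^+/\fJ_{\Phi_i}$, namely the conformal $\hat\gamma$ above.

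What must still be verified --- and I expect this to be the only real obstacle --- is that the family $\srep_{\fT_2}\A_d\times_\Phi\bZ^+$ is rich enough, since a representation of the \emph{commutative} algebra $\A_d$ into $\fT_2$ must have commuting generators. Given $z\in\bB_d\bsl\Fix(\phi)$ and $w=\hat\phi(z)\ne z$, I would take
\[
 \pi(\hat S_j)=\begin{sbmatrix} w_j & t\,(w_j-z_j)\\ 0 & z_j\end{sbmatrix},\qquad
 \pi(U)=\begin{sbmatrix} 0 & c\\ 0 & 0\end{sbmatrix},
\]
for small nonzero $t,c$. The generators commute precisely because the superdiagonal vector $(w_j-z_j)_j$ is proportional to $w-z$; for small $t$ the $d$-tuple is a commuting row contraction, so by universality of $\A_d$ it defines a completely contractive representation, and the covariance $\pi(\hat S_j)\pi(U)=\pi(U)\pi(\Phi(\hat S_j))$ holds exactly because the lower diagonal of $\Phi(\hat S_j)$ is $\theta_z(\Phi(\hat S_j))=\hat S_j(\hat\phi(z))=w_j$. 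Since $z\ne w$ and $\pi(U)\ne0$, the range is all of $\fT_2$, so $\pi\in\srep_{\fT_2}\A_d\times_\Phi\bZ^+$ with $\theta_{\pi,2}=\theta_{z,0}$ and $\theta_{\pi,1}=\theta_{w,0}=\hat\phi(\theta_{\pi,2})$. With this family in hand, the dichotomy of Theorem~\ref{main} --- the case $\theta\in F_0(\phi_1)$ from Lemma~\ref{useful}, the case $\theta\notin F_0(\phi_1)$ from these $\fT_2$ representations and the covariance relation --- yields $\hat\gamma\circ\phi_1=\phi_2\circ\hat\gamma$, and the converse direction is immediate.
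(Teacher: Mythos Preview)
Your proof is correct, and the overall architecture matches the paper's: carry Lemma~\ref{useful} and Lemma~\ref{Ligo} over verbatim, locate a simplification stemming from the commutativity of $\A_d$, and then run the nest-representation argument of Theorem~\ref{main}. Where you diverge is in \emph{which} simplification you exploit. You show that $\fJ_\Phi$ is the common kernel of the characters $\theta_{z,0}$ for $z\in\bB_d$, so that it is an algebraic invariant without any of the Section~\ref{S:ncdisc} machinery, and then you explicitly build members of $\srep_{\fT_2}\A_d\times_\Phi\bZ^+$. The paper's route is shorter: it observes directly that, since $\A_d$ is commutative, any representation of $\A_d\times_\Phi\bZ^+$ onto $\fT_2$ which kills $U$ would factor through $\A_d$ and hence have commutative (so proper) range; thus $\pi(U)\ne0$ automatically for every nest representation onto $\fT_2$, and one never needs to identify $\fJ_\Phi$ at all. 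What your approach buys is explicitness---you actually write down the nest representations rather than invoke a ``standard dilation argument,'' and your character description of $\fJ_\Phi$ is a clean fact in its own right. What the paper's approach buys is brevity: the single sentence about commutativity replaces both your $\fJ_\Phi$ paragraph and the analogue of Corollary~\ref{usefultoo}. Incidentally, your explicit construction works just as well with $t=0$ (diagonal $\pi(\hat S_j)$), which is the ``standard'' choice.
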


In order to prove Theorem \ref{main} we showed that the ideal $\fJ_{\phi}$ is an
isomorphism invariant. We then defined as  $\srep_{\fT_2} \fA_d \times_\phi \bZ^+$ to be
the class of $2 \times 2$ nest representations of $\fA_d \times_{\phi_1} \bZ^+$
which do not annihilate $\fJ_{\phi}$. Theorem \ref{main}
implies then that $\srep_{\fT_2} \fA_d \times_\phi \bZ^+$
is invariant under the dual actions of isomorphisms.

The proof of Theorem \ref{secondmain} requires much less. Since $\A_d$ is commutative,
any $2 \times 2$ nest representation of $\A_d \times_{\phi_1} \bZ^+$ could not
annihilate the universal contraction $U$, for then it will not be surjective.
Hence repeating mutatis mutandis the arguments in the proof of
Theorem \ref{main}, we get a proof for Theorem \ref{secondmain} as well.
(Recall that the results of Section \ref{sec:character}
have been established for both $\fA_n \times_{\phi_1} \bZ^+$ and
$\A_d \times_{\phi_1} \bZ^+$.)


\end{document}